\long\def\symbolfootnote[#1]#2{\begingroup%
\def\thefootnote{\fnsymbol{footnote}}\footnote[#1]{#2}\endgroup}
\begin{document}

\setlength{\parindent}{0pt} \setlength{\parskip}{1ex}

\newtheorem{Le}{Lemma}[section]
\newtheorem{Pro}[Le]{Proposition}
\newtheorem{Th}[Le]{Theorem}
\newtheorem{Co}[Le]{Corollary}
\newtheorem{Conj}[Le]{Conjecture}

\theoremstyle{definition}
\newtheorem{Def}[Le]{Definition}
\newtheorem{DaL}[Le]{Definition and Lemma}
\newtheorem{Conv}[Le]{Convention}
\newtheorem{Assumpt}[Le]{Assumption}

\theoremstyle{remark}
\newtheorem{Ex}[Le]{Example}
\newtheorem{Rem}[Le]{Remark}

\author{Frank Reidegeld}
\title[Exceptional holonomy on vector bundles]{Exceptional holonomy 
on vector bundles with two-dimensional fibers}
\address{E-Mail: frank.reidegeld\char"40 math.tu-dortmund.de}

\begin{abstract}
An $SU(3)$- or $SU(1,2)$-structure on a 6-dimensional manifold
$N^6$ can be defined as a pair of a 2-form $\omega$ and a 3-form
$\rho$. We prove that any analytic $SU(3)$- or $SU(1,2)$-structure 
on $N^6$ with $d\omega \wedge \omega =0$ can be extended to a
parallel Spin($7$)- or $\text{Spin}_0(3,4)$-structure $\Phi$ that is 
defined on the trivial disc bundle $N^6\times B_{\epsilon}(0)$ for a 
sufficiently small $\epsilon>0$. Furthermore, we show by an example 
that $\Phi$ is not uniquely determined by $(\omega,\rho)$ and discuss
if our result can be generalized to non-trivial bundles.
\end{abstract}

\maketitle

\symbolfootnote[0]{The author was supported by the 
Individual Grant RE 3147/1-1 of the \emph{Deutsche Forschungsgemeinschaft}
while working on this article.}

\setcounter{tocdepth}{1}
\tableofcontents

\section{Introduction}

In his article on stable forms, Hitchin \cite{Hitchin} proposed
a new method to construct manifolds with exceptional holonomy. The
starting point of his construction is a 7-dimensional manifold $M$
with a $G_2$-structure $\phi$ that satisfies $d\ast\phi=0$. We can 
take $\phi$ as an initial value for a certain flow equation such that the solution 
of the initial value problem yields a parallel Spin($7$)-structure on $M\times 
(-\epsilon,\epsilon)$ for an $\epsilon>0$. This idea can be generalized to 
the semi-Riemannian case where we obtain a parallel 
$\text{Spin}_0(3,4)$-structure \cite{Cortes}.

Many of the known complete metrics with holonomy Spin($7$) are not defined on
a manifold of type $M\times (-\epsilon,\epsilon)$ but on a disc bundle
over a lower-dimensional manifold \cite{Baz,Baz2,BrS,Cve,KanI,KanII,Rei}. 
The reason behind this is that those metrics are
of cohomogeneity one and that the cohomogeneity-one manifolds of this
type are the only ones that admit complete metrics with holonomy Spin($7$)
\cite{Rei}.

Bielawski \cite{Biel} proves another result that fits into this context. Let $X$ 
be a real analytic K\"ahler manifold. We identify $X$ with the zero section of 
its canonical bundle. The K\"ahler metric on $X$ can be uniquely 
extended to a Ricci-flat K\"ahler metric on a neighborhood of $X$ such that 
the $U(1)$-action on the bundle is isometric and Hamiltonian. We thus
have extended the $U(n)$-structure on the base to an $SU(n+1)$-structure
on the bundle.

Motivated by these facts, we attempt to construct parallel Spin($7$)- or\linebreak
$\text{Spin}_0(3,4)$-structures on $\mathbb{R}^2$-bundles. More 
precisely, let $(\omega,\rho)$ be a pair of a 2-form and a 3-form on 
a 6-dimensional manifold $N^6$ that defines an $SU(3)$- or 
$SU(1,2)$-structure. We search for conditions on $(\omega,\rho)$ 
such that on $M^8:=N^6 \times B_{\epsilon}(0)$, where $B_{\epsilon}(0)$ 
is a ball of radius $\epsilon>0$ in $\mathbb{R}^2$, there exists a 
pa\-rallel Spin($7$)- or $\text{Spin}_0(3,4)$-structure that extends
in a suitable sense the $G$-structure $(\omega,\rho)$. We also
discuss the case where $M^8$ is a bundle over $N^6$ with 
$B_{\epsilon}(0)$ as fiber. 

The problem of how to extend a geometric structure on an 
$(n-1)$-dimensional manifold to a manifold of dimension $n$ with
special holonomy or another kind of special geometry has been 
extensively studied in the literature \cite{Br2010},\cite{Conti},\cite{ContiS},
\cite{Cortes},\cite{Hitchin},\cite{Stock1},\cite{Stock2}. To our best
knowledge the case where the codimension is $2$ is dealt with only in
\cite{Biel} and the present paper. 

The article is organized as follows. In Section 2 and 3 we give
an introduction to the $G$-structures that we need and to Hitchin's
flow equation. We set up our initial value problem and prove that 
it has a local solution in the following section. After that we show
with help of an example that our solution can be non-unique.
In the sixth section, we finally discuss if our result can be generalized
to non-trivial bundles over 6-dimensional manifolds.

\section{$G$-structures}

\subsection{$G$ is $SU(3)$ or $SU(1,2)$}
\label{SU3Subsection}

In order to prove our theorem we have to introduce several
$G$-structures. We start with $G$-structures on 6-dimensional
manifolds and then proceed to the 7- and 8-dimensional case.
A well written introduction to all of these $G$-structures can be 
found in Cort\'{e}s et al. \cite{Cortes}. We use similar conventions 
as \cite{Cortes} and only recapitulate the facts that we need for 
our considerations. Although a $G$-structure is in general defined 
as a principal bundle, all $G$-structures in this section can be 
described with help of certain differential forms. Throughout this 
article we use the following convention.

\begin{Conv}
Let $(v_i)_{i\in I}$ be a basis of a vector space $V$. We denote
its dual basis by $(v^i)_{i\in I}$ and abbreviate $v^{i_1} \wedge
\ldots \wedge v^{i_k}$ by $v^{i_1\ldots i_k}$.
\end{Conv}

Let $(e_i)_{i=1,\ldots,6}$ be the canonical basis of
$\mathbb{R}^6$. We define the 2-forms

\begin{equation}
\label{omega1} \omega_{SU(3)} := e^{12} + e^{34} + e^{56}
\end{equation}

and

\begin{equation}
\label{omega2} \omega_{SU(1,2)} := -e^{12} - e^{34} + e^{56}\:.
\end{equation}

Moreover, we introduce the canonical 3-form

\begin{equation}
\label{rhoc} \rho_{can.} := e^{135} - e^{146} - e^{236} -
e^{245}\:.
\end{equation}

The following lemma is proven in \cite{Cortes}.

\begin{Le}
\label{SU3LemmaI} Let $G\in \{SU(3), SU(1,2)\}$. The subgroup of
all $A\in GL(6,\mathbb{R})$ that stabilize $\omega_G$ and
$\rho_{can.}$ simultaneously is isomorphic to $G$.
\end{Le}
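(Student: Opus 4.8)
The plan is to establish the two inclusions $G \subseteq \mathrm{Stab}(\omega_G,\rho_G)$ and $\mathrm{Stab}(\omega_G,\rho_G) \subseteq G$ separately, the main tool for the second being Hitchin's description of a stable $3$-form in six dimensions.

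For the first inclusion I would identify $\mathbb{R}^6$ with $\mathbb{C}^3$ by $z^j := e^{2j-1} + i\,e^{2j}$ (and, in the case $G = SL(3,\mathbb{R})$, with $\mathbb{R}^3$ over the paracomplex numbers $\mathbb{R}[\epsilon]$, $\epsilon^2=1$, by the same formula with $i$ replaced by $\epsilon$). A short expansion shows that with respect to the resulting (para)complex structure $J_0$, the form $\omega_{SU(3)}$ is the standard Kähler form and $\omega_{SU(1,2)}$ the pseudo-Kähler form of Hermitian signature $(1,2)$, while in every case $\rho_G = \mathrm{Re}\,\Omega$ with $\Omega := dz^1\wedge dz^2\wedge dz^3$, and I set $\widehat{\rho}_G := \mathrm{Im}\,\Omega$. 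Hence any $A \in SU(3)$ (resp. $SU(1,2)$, $SL(3,\mathbb{R})$) preserves $\omega_G$, since it lies in the corresponding (pseudo-)unitary group, and preserves $\mathrm{Re}\,\Omega = \rho_G$, since it has (para)complex determinant $1$.

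For the reverse inclusion I would take $A \in GL(6,\mathbb{R})$ with $A^*\omega_G = \omega_G$ and $A^*\rho_G = \rho_G$. Each $\rho_G$ is stable, so Hitchin's construction attaches to it a canonical endomorphism $K_{\rho_G}$ with $K_{\rho_G}^2 = \lambda(\rho_G)\,\mathrm{Id}$; an explicit computation with \eqref{rhoc}--\eqref{rhopc} shows $K_{\rho_{SU(3)}} = K_{\rho_{SU(1,2)}} = J_0$ with $\lambda < 0$, and that $K_{\rho_{SL(3,\mathbb{R})}}$ is the standard paracomplex structure with $\lambda > 0$. Since $\rho \mapsto K_\rho$ is $GL(6,\mathbb{R})$-equivariant, $A^*\rho_G = \rho_G$ forces $A K_{\rho_G} = K_{\rho_G} A$, so $A$ is $\mathbb{C}$-linear (resp. paracomplex-linear). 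Writing $A^*\Omega = c\,\Omega = (\det_{\mathbb{C}} A)\,\Omega$, the identity $A^*\rho_G = \mathrm{Re}(c)\,\rho_G - \mathrm{Im}(c)\,\widehat{\rho}_G$ together with the linear independence of $\rho_G$ and $\widehat{\rho}_G$ gives $c=1$; thus $A$ has (para)complex determinant $1$ and moreover $A^*\widehat{\rho}_G = \widehat{\rho}_G$. Finally, being $K_{\rho_G}$-linear and preserving $\omega_G$, the map $A$ also preserves the (pseudo-)metric $g_G$ built from $\omega_G$ and $K_{\rho_G}$, hence the whole (pseudo-)Hermitian form; so $A$ lies in $U(3)\cap SL(3,\mathbb{C}) = SU(3)$, in $U(1,2)\cap SL(3,\mathbb{C}) = SU(1,2)$, or---after identifying the paracomplex special unitary group with its diagonal copy---in $SL(3,\mathbb{R})$.

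The main obstacle is conceptual rather than computational: the hypothesis only postulates invariance of $\omega_G$ and $\rho_G$, with no compatibility relation (such as $\omega_G\wedge\rho_G = 0$) or normalization assumed, so one must show that invariance of $\rho_G$ alone already determines the (para)complex structure---which is exactly the role of Hitchin's canonical $K_\rho$---and that invariance of $\omega_G$ then propagates to $g_G$ and to $\widehat{\rho}_G$. A secondary point requiring care is the $SL(3,\mathbb{R})$ case, where ``special linear over the paracomplex numbers'' splits as $SL(3,\mathbb{R})\times SL(3,\mathbb{R})$ and one has to check that intersecting with the split-signature para-unitary group cuts it down precisely to the diagonal $SL(3,\mathbb{R})$; this, together with the sign computations for $\lambda(\rho_G)$, is the only place where the explicit normal forms \eqref{omega1}--\eqref{rhopc} really enter.
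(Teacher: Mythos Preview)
Your proposal is correct and follows essentially the same line as the paper. The paper does not spell out a proof at this point but cites \cite{Cortes}, and a few paragraphs later sketches exactly your argument: the stabilizer of $(\omega,\rho)$ coincides with that of $(\omega,\rho,J_{\rho},g_6)$ because the $GL(6,\mathbb{R})$-equivariant maps $\rho\mapsto J_{\rho}$ and $(\omega,\rho)\mapsto g_6$ force any $A$ fixing $(\omega,\rho)$ to fix $J_{\rho}$ and $g_6$ as well, and the latter stabilizer is classically $SU(3)$, $SU(1,2)$, or $SL(3,\mathbb{R})$. Your write-up simply makes this explicit---including the determinant-$1$ step via $A^{\ast}\Omega=(\det_{\mathbb{C}}A)\,\Omega$ and the para-unitary reduction in the $SL(3,\mathbb{R})$ case---so there is no substantive difference in strategy.
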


This motivates the following definition.

\begin{Def}
\label{SU3Def} Let $G\in \{SU(3), SU(1,2)\}$, $V$ be a
6-dimensional real vector space and $(\omega,\rho)$ be a pair of a
2-form and a 3-form on $V$. If there exists a basis
$(v_i)_{i=1,\ldots,6}$ of $V$ such that with respect to this basis
$\omega$ can be identified with $\omega_G$ and $\rho$ with
$\rho_{can.}$, $(\omega,\rho)$ is called a \emph{$G$-structure}.
\end{Def}

Hitchin \cite{Hitchin} has introduced the notion of a stable form.

\begin{Def}
Let $V$ be a real or complex vector space and $\beta\in
\bigwedge^k V^{\ast}$ with $k\in\{0,\ldots,\dim{V}\}$ be a
$k$-form. $\beta$ is called \emph{stable} if the $GL(V)$-orbit of
$\beta$ is an open subset of $\bigwedge^k V^{\ast}$.
\end{Def}

\begin{Le}
Let $(\omega,\rho)$ be a $G$-structure where $G\in \{SU(3),
SU(1,2)\}$. In this situation, $\omega$ and $\rho$ are both stable
forms.
\end{Le}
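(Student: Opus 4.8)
The plan is to show that both $\omega$ and $\rho$ have open $GL(6,\mathbb{R})$-orbits, and I would derive this directly from Lemma~\ref{SU3LemmaI} together with a dimension count. First I would recall that the orbit of a form $\beta$ under $GL(V)$ is, as a smooth manifold, diffeomorphic to the homogeneous space $GL(V)/\mathrm{Stab}(\beta)$, so that
\begin{equation*}
\dim \bigl(GL(V)\cdot\beta\bigr) = \dim GL(6,\mathbb{R}) - \dim \mathrm{Stab}(\beta) = 36 - \dim \mathrm{Stab}(\beta).
\end{equation*}
Since an orbit is open in $\bigwedge^k V^{\ast}$ precisely when its dimension equals $\dim \bigwedge^k V^{\ast}$, it suffices to compare $36 - \dim \mathrm{Stab}(\beta)$ with $\binom{6}{k}$ for $k=2$ and $k=3$.

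For $\rho$, the relevant ambient space is $\bigwedge^3 (\mathbb{R}^6)^{\ast}$, which has dimension $\binom{6}{3}=20$, so I need the stabilizer of $\rho$ in $GL(6,\mathbb{R})$ to have dimension $16$. Here the natural move is to invoke Lemma~\ref{SU3LemmaI}: the \emph{joint} stabilizer of $\omega_G$ and $\rho_G$ is isomorphic to $G$, which has dimension $8$ in all three cases ($SU(3)$, $SU(1,2)$ and $SL(3,\mathbb{R})$ all have real dimension $8$). The stabilizer of $\rho$ alone is larger; I would identify it with (a real form of) $SL(3,\mathbb{C})\ltimes(\text{something})$ — more precisely, the $GL(6,\mathbb{R})$-stabilizer of $\rho_{SU(3)}=\rho_{SU(1,2)}$ is $SL(3,\mathbb{C})$ acting on $\mathbb{R}^6=\mathbb{C}^3$ (which has real dimension $16$), and the stabilizer of $\rho_{SL(3,\mathbb{R})}$ is $SL(3,\mathbb{R})\times SL(3,\mathbb{R})$ together with the swap, also of dimension $16$. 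Either way the dimension is $16$, giving an orbit of dimension $36-16=20=\dim\bigwedge^3(\mathbb{R}^6)^{\ast}$, hence the orbit is open and $\rho$ is stable. This is essentially the classical computation of Hitchin for three-forms in six dimensions, so I would cite \cite{Hitchin} for the identification of these stabilizers rather than recomputing them.

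For $\omega$, the ambient space $\bigwedge^2(\mathbb{R}^6)^{\ast}$ has dimension $\binom{6}{2}=15$. A nondegenerate $2$-form on a $6$-dimensional space is stabilized by a conjugate of $Sp(6,\mathbb{R})$ (respectively $Sp(4,2)$ in the mixed-signature case, but this is again a real form of $Sp(6,\mathbb{C})$ of the same real dimension), which has dimension $21$; then the orbit has dimension $36-21=15$, which is full, so $\omega$ is open as well. Alternatively, and perhaps more cleanly for the purposes of this paper, I would simply note that the nondegenerate $2$-forms form an open dense subset of $\bigwedge^2(\mathbb{R}^6)^{\ast}$ (nondegeneracy is the non-vanishing of the Pfaffian, a polynomial condition) and that $GL(6,\mathbb{R})$ acts transitively on them by the linear Darboux theorem, so they constitute a single open orbit containing $\omega$.

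The main obstacle — really the only subtlety — is pinning down the dimensions of the single-form stabilizers $\mathrm{Stab}(\omega)$ and $\mathrm{Stab}(\rho)$ correctly across all three real forms, since Lemma~\ref{SU3LemmaI} only gives the intersection $\mathrm{Stab}(\omega)\cap\mathrm{Stab}(\rho)\cong G$. For $\omega$ this is handled by linear Darboux. For $\rho$ one must genuinely know that the stabilizer of a stable $3$-form in dimension $6$ is $16$-dimensional; I would either cite Hitchin's classification of the $GL(6,\mathbb{R})$-orbits of $3$-forms in $\bigwedge^3(\mathbb{R}^6)^{\ast}$ (which distinguishes the two open orbits, one with stabilizer $SL(3,\mathbb{C})$ and one with stabilizer $SL(3,\mathbb{R})\times SL(3,\mathbb{R})\rtimes\mathbb{Z}/2$) or else directly exhibit a $16$-dimensional family of explicit infinitesimal symmetries of $\rho_G$ and show it is maximal. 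Since the excerpt has already introduced stability and cited \cite{Hitchin}, the cleanest writeup simply remarks that $\omega_G$ and $\rho_G$ are the standard normal forms appearing in Hitchin's theory of stable forms, whose openness of orbit is exactly the defining property, and the lemma is then immediate.
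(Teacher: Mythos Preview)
Your proposal is correct, and in fact more detailed than what the paper does: the paper states this lemma without proof and simply records in the subsequent Remark that there is one open $GL(6,\mathbb{R})$-orbit in $\bigwedge^2\mathbb{R}^{6\ast}$ and two in $\bigwedge^3\mathbb{R}^{6\ast}$ (the orbits of $\rho_{SU(3)}$ and $\rho_{SL(3,\mathbb{R})}$), treating this as a known fact from Hitchin's theory of stable forms. Your final suggestion --- to observe that $\omega_G$ and $\rho_G$ are exactly the normal forms in Hitchin's classification and cite \cite{Hitchin} --- is precisely the paper's implicit approach; the dimension count you sketch is a legitimate self-contained justification that the paper omits.
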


\begin{Rem}
The stable forms are an open dense subset of $\bigwedge^2
\mathbb{R}^{6\ast}$ and of $\bigwedge^3 \mathbb{R}^{6\ast}$.
There is exactly one open $GL(6,\mathbb{R})$-orbit in $\bigwedge^2
\mathbb{R}^{6\ast}$ and two open orbits in $\bigwedge^3
\mathbb{R}^{6\ast}$. One of them is the orbit of $\rho_{can.}$.
The other one can be used to define the notion of an
$SL(3,\mathbb{R})$-structure, which we will not consider in this
article.
\end{Rem}

Let $V$ be a 6-dimensional real vector space and $\bigwedge^k_s
V^{\ast}$ be the set of all stable $k$-forms on $V$. We can assign
to any $\rho\in \bigwedge^3_s V^{\ast}$ a certain endomorphism
$J_{\rho}$ by a map

\begin{equation}
i: {\bigwedge}^3_s V^{\ast}  \rightarrow  V\otimes V^{\ast}\:.
\end{equation}

$i$ is a rational $GL(6,\mathbb{R})$-equivariant map and is
described in detail in \cite{Cortes}. $i(\rho_{can.})$ is the
canonical complex structure on $\mathbb{R}^6$ which maps
$e_{2i-1}$ to $-e_{2i}$ and $e_{2i}$ to $e_{2i-1}$ for all
$i\in\{1,2,3\}$. If $(\omega,\rho)$ is an $SU(3)$- or an
$SU(1,2)$-structure, $J_{\rho}$ is a complex structure, too.
With help of another map

\begin{equation}
j: {\bigwedge}^2_s V^{\ast} \times {\bigwedge}^3_s V^{\ast}
\rightarrow S^2(V^{\ast})
\end{equation}

we can assign to $(\omega,\rho)$ a symmetric non-degenerate
bilinear form. $j$ is also a rational $GL(6,\mathbb{R})$-equivariant
map that is described explicitly in \cite{Cortes}. If $(\omega,\rho)$ 
is an

\begin{enumerate}
    \item $SU(3)$-structure, $j(\omega,\rho)$ is a metric with
    signature $(6,0)$. In particular, $j(\omega_{SU(3)},\rho_{can.})$
    is the Euclidean metric on $\mathbb{R}^6$.
    \item $SU(1,2)$-structure, $j(\omega,\rho)$ is a metric with
    signature $(2,4)$. In particular,

    \vspace{-11pt}

    \begin{equation}
    \label{24metric}
    \begin{array}{rcl}
    j(\omega_{SU(1,2)},\rho_{can.}) & = & -e^1\otimes e^1 -e^2\otimes e^2
    -e^3\otimes e^3 -e^4\otimes e^4\\
    && + e^5\otimes e^5 +e^6\otimes e^6\:.
    \end{array}
    \end{equation}
\end{enumerate}

\bigskip

\begin{Conv}
\begin{enumerate}
    \item We call $J_{\rho}$ the \emph{complex structure that is
    associated to $\rho$} or shortly the \emph{associated complex
    structure}.
    \item We call $j(\omega,\rho)$ the \emph{metric that is
    associated to $(\omega,\rho)$} or shortly the \emph{associated
    metric}. We denote it by $g_6$, since we will also work with
    metrics on 7- or 8-dimensional spaces.
\end{enumerate}
\end{Conv}

We remark that the objects that we have defined are related by the
formula

\begin{equation}
\label{omegaequation} \omega(v,w) := g_6(v,J_{\rho}(w))\:.
\end{equation}

We can decide if a pair $(\omega,\rho)$ determines an $SU(3)$- or
$SU(1,2)$-structure without referring to a special basis.

\begin{Th}
\label{SU3StrThm} Let $V$ be a 6-dimensional real vector space and
let $\omega\in \bigwedge^2 V^{\ast}$ and $\rho\in \bigwedge^3
V^{\ast}$ be stable. Moreover, let $J_{\rho}$ and $g_6$ be defined
as above. We assume that $\omega$ and $\rho$ satisfy the equations

\begin{enumerate}
    \item $\omega\wedge\rho = 0$,
    \item $J^{\ast}_{\rho}\rho \wedge \rho = \tfrac{2}{3}
    \omega\wedge\omega\wedge\omega$.
\end{enumerate}

If in this situation

\begin{enumerate}
    \item $g_6$ has signature $(6,0)$ and $J_{\rho}$ is a complex
    structure, $(\omega,\rho)$ is an $SU(3)$-structure.
    \item $g_6$ has signature $(2,4)$ and $J_{\rho}$ is a complex
    structure, $(\omega,\rho)$ is an $SU(1,2)$-structure.
\end{enumerate}
\end{Th}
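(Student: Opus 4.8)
The plan is to reduce the statement to the orbit description of Lemma~\ref{SU3LemmaI} by exhibiting, for a pair $(\omega,\rho)$ satisfying the hypotheses, an explicit adapted basis. First I would use the fact that $\rho$ is stable together with the $GL(6,\mathbb{R})$-equivariant map $i$: since $i(\rho)=J_\rho$ squares to $\mp\operatorname{id}$ according to the prescribed case, and all the relevant data transform equivariantly, one knows abstractly that there is \emph{some} basis in which $\rho$ is carried to one of the two normal forms $\rho_{SU(3)}$ or $\rho_{SL(3,\mathbb{R})}$ --- this is exactly the statement that there are only two open $GL(6,\mathbb{R})$-orbits of stable $3$-forms (the Remark). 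The sign of $J_\rho^2$ selects which orbit we are in: $J_\rho^2=-\operatorname{id}$ forces $\rho$ into the $\rho_{SU(3)}$-orbit, and $J_\rho^2=+\operatorname{id}$ into the $\rho_{SL(3,\mathbb{R})}$-orbit. So after an initial change of basis we may assume outright that $\rho=\rho_{SU(3)}$ (cases (1),(2)) or $\rho=\rho_{SL(3,\mathbb{R})}$ (case (3)), and $J_\rho$ is the corresponding standard (para-)complex structure $J_0$.

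The remaining freedom is the stabilizer of $\rho_G$ inside $GL(6,\mathbb{R})$, which is the stabilizer of $J_0$ together with a determinant-type condition; concretely it is (a form of) $SL(3,\mathbb{C})$ acting in the holomorphic picture (resp. $SL(3,\mathbb{R})\times SL(3,\mathbb{R})$ in the para-complex picture). Now bring in $\omega$. The hypothesis $\omega\wedge\rho=0$ says exactly that $\omega$ is of type $(1,1)$ with respect to $J_0$ --- equivalently $J_0^\ast\omega=\omega$ --- so $\omega(J_0\cdot,J_0\cdot)=\omega(\cdot,\cdot)$, hence the symmetric form $g_6(v,w):=\omega(v,J_0 w)$ is $J_0$-Hermitian (resp. para-Hermitian). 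That $g_6$ so defined coincides with $j(\omega,\rho)$ is the content of formula~(\ref{omegaequation}), which holds because $i$ and $j$ are equivariant and the identity is true at the base point. The normalization condition (2), $J^\ast_\rho\rho\wedge\rho=\tfrac23\,\omega^3$, is the statement that $\omega$ is normalized so that the induced volume forms from $\rho$ and from $\omega$ agree --- at the base point one checks $\rho_G\wedge J_0^\ast\rho_G = 4\, e^{123456}$ and $\omega_G^3 = 6\, e^{123456}$ (with the appropriate signs in the $SU(1,2)$ case), so condition (2) pins down the overall scale of $\omega$ relative to $\rho$.

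With $\rho$ already in normal form and $\omega$ a nondegenerate (para-)Hermitian $(1,1)$-form of the correct total volume, I would finish by invoking the standard linear-algebra normal form for a Hermitian (resp. para-Hermitian) form relative to a fixed (para-)complex structure: choose a $g_6$-pseudo-orthonormal basis adapted to $J_0$, i.e. of the form $v_1, J_0 v_1, v_2, J_0 v_2, v_3, J_0 v_3$ (a unitary-type frame). The signature hypothesis --- $(6,0)$, $(2,4)$, or $(3,3)$ --- determines how many of the three Hermitian planes are negative-definite, which is precisely the distinction between $\omega_{SU(3)}$ and $\omega_{SU(1,2)}$ (one sign flip on the third $\mathbb{C}$-line); the volume normalization (2) rules out any remaining scaling ambiguity and fixes the sign pattern to match one of $\omega_{SU(3)}$, $\omega_{SU(1,2)}$, $\omega_{SL(3,\mathbb{R})}$ exactly rather than a rescaled version. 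In this adapted basis $(\omega,\rho)$ is simultaneously identified with $(\omega_G,\rho_G)$, so by Definition~\ref{SU3Def} it is a $G$-structure.

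The main obstacle is the bookkeeping in the second step: one must check carefully that the condition $\omega\wedge\rho=0$ really does force type $(1,1)$ (rather than just annihilating the $(3,0)+(0,3)$ part in a weaker sense) and that, combined with nondegeneracy of $g_6=j(\omega,\rho)$ and the volume normalization (2), the stabilizer-of-$\rho_G$ action is transitive on the set of admissible $\omega$'s of each fixed signature. Equivalently, one needs that $\omega$ and the chosen adapted basis can be matched up \emph{without disturbing} the normal form already achieved for $\rho$ --- i.e. the unitary(-type) subgroup acts transitively on Hermitian forms of fixed signature and volume. I expect that the cleanest route is to work in the holomorphic (resp. para-holomorphic) splitting $V\otimes\mathbb{C}=V^{1,0}\oplus V^{0,1}$ determined by $J_0$, write $\rho_G$ as the real part of a decomposable $(3,0)$-form $\psi$, note that the $\rho_G$-stabilizer is the group preserving $\psi$ (hence $\cong SL(3,\mathbb{C})$ resp.\ its para-analogue), and then apply Sylvester's law of inertia on $V^{1,0}$ for the Hermitian form $-i\,\omega$; this packages all the sign-counting into one classical statement and avoids a long coordinate computation.
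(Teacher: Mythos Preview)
The paper does not actually prove Theorem~\ref{SU3StrThm}; it is stated without proof as part of the background material taken from \cite{Cortes} (see the opening paragraph of Section~2), so there is no in-paper argument to compare yours against.

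On its own merits your outline is the standard one and is correct: normalize $\rho$ first using the two-orbit classification of stable $3$-forms (the sign of $J_\rho^2$ selects the orbit), then use the stabilizer of $\rho_G$---which is $SL(3,\mathbb{C})$ in the complex case and the para-analogue otherwise---to bring $\omega$ to normal form. Your identification of the two genuine checkpoints is accurate: (i) that $\omega\wedge\rho=0$ forces $\omega$ to be of type $(1,1)$ (this follows by writing $\rho=\operatorname{Re}\psi$ with $\psi$ decomposable $(3,0)$ and noting that $\omega^{2,0}\wedge\bar\psi$ and $\omega^{0,2}\wedge\psi$ live in different bidegrees, so each must vanish separately), and (ii) that $SL(3,\mathbb{C})$ acts transitively on nondegenerate Hermitian forms of fixed signature and fixed determinant, the determinant being pinned down by the normalization~(2). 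One small bookkeeping slip: you say $\omega_{SU(1,2)}$ differs from $\omega_{SU(3)}$ by ``one sign flip on the third $\mathbb{C}$-line'', but in the paper's conventions two of the three signs flip (compare (\ref{omega1}) and (\ref{omega2})); this does not affect the argument.
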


\begin{Rem}
\begin{enumerate}
    \item Since $J_{\rho}^{\ast}\rho \wedge \rho$ and
    $\tfrac{2}{3} \omega\wedge\omega\wedge\omega$ are both
    6-forms, the second condition from the theorem is a
    normalization of the pair $(\omega,\rho)$.
    \item If $(\omega,\rho)$ is a pair of stable forms satisfying
    $\omega\wedge\rho = 0$ and $J^{\ast}_{\rho}\rho \wedge \rho =
    \tfrac{2}{3} \omega\wedge\omega\wedge\omega$ and it is not
    an $SU(3)$- or $SU(1,2)$-structure, $J_{\rho}$ is a
    para-complex structure and $(\omega,\rho)$ is an
    $SL(3,\mathbb{R})$-structure.
\end{enumerate}
\end{Rem}

The reason for the above considerations is to define 
$G$-structures on manifolds.

\begin{Def}
\label{SU3DefII} Let $M$ be a 6-dimensional manifold, $\omega\in
\bigwedge^2 T^{\ast}M$, and $\rho\in \bigwedge^3 T^{\ast}M$.
Moreover, let $G\in \{SU(3),SU(1,2)\}$. $(\omega,\rho)$ is called
a \emph{$G$-structure on $M$} if for all $p\in M$
$(\omega_p,\rho_p)$ is a $G$-structure on $T_pM$.
\end{Def}

\begin{Conv}
Since the endomorphism field $J_{\rho}$ in general has torsion, we
call it the \emph{almost} complex structure on $M$.
\end{Conv}

\subsection{$G$ is $G_2$ or $G_2^{\ast}$}
\label{G2Subsection}

With help of the concepts from the previous subsection we
are able to define $G_2$- and $G_2^{\ast}$-structures.

\begin{DaL}
\label{G2Def} We supplement the basis $(e_i)_{i=1,\ldots,6}$ of
$\mathbb{R}^6$ with $e_7$ to a basis of $\mathbb{R}^7$. The form

\begin{enumerate}
    \item $\phi_{G_2} := \omega_{SU(3)}\wedge e^7 + \rho_{can.}$ is
    stabilized by $G_2$.
    \item $\phi_{G_2^{\ast}} := \omega_{SU(1,2)}\wedge e^7 + \rho_{can.}$ is
    stabilized by $G_2^{\ast}$.
\end{enumerate}

$G_2$ denotes the compact real form of the complex Lie group
$G_2^{\mathbb{C}}$ and $G_2^{\ast}$ denotes the split real form.
Let $V$ be a 7-dimensional real vector space and $\phi$ be a
3-form on $V$. If there exists a basis $(v_i)_{i=1,\ldots,7}$ of
$V$ such that with respect to $(v_i)_{i=1,\ldots,7}$

\begin{enumerate}
    \item $\phi$ can be identified with $\phi_{G_2}$,
    $\phi$ is called a \emph{$G_2$-structure}.
    \item\label{G2ast1} $\phi$ can be identified with $\phi_{G_2^{\ast}}$,
    $\phi$ is called a \emph{$G_2^{\ast}$-structure}.
\end{enumerate}
\end{DaL}

\begin{Rem}
\label{G2DefRemark} There are exactly two open orbits of the
action of $GL(7,\mathbb{R})$ on $\bigwedge^3 \mathbb{R}^{7\ast}$
\cite{Reichel,Schouten}. Their union is a dense subset of
$\bigwedge^3 \mathbb{R}^{7\ast}$. One orbit consists of all
3-forms that are stabilized by $G_2$ and the other one consists
of all 3-forms that are stabilized by $G_2^{\ast}$.
\end{Rem}

Any $G_2$- or $G_2^{\ast}$-structure on a vector space $V$
determines a symmetric non-degenerate bilinear form $g_7$ and a
volume form $\text{vol}_7$. As in the previous subsection,
there are explicit rational $GL(7,\mathbb{R})$-equivariant maps
$\bigwedge^3_s V^{\ast} \rightarrow S^2(V^{\ast})$ and
$\bigwedge^3_s V^{\ast} \rightarrow \bigwedge^7 V^{\ast}$ that
assign $g_7$ and $\text{vol}_7$ to $\phi$. The explicit definition
of these maps can be found in \cite{Cortes}. The tensors $\phi$,
$g_7$, and $\text{vol}_7$ are related by the formula

\begin{equation}
\label{AssMetricG2} g_7(v,w)\:\text{vol}_7 = \tfrac{1}{6}
(v\lrcorner \phi) \wedge (w\lrcorner \phi) \wedge \phi
\quad \forall v,w\in V\:.
\end{equation}

Analogously to Subsection \ref{SU3Subsection}, we have

\begin{Le}
\label{g7Lemma}
Let $V$ be a 7-dimensional real vector space and $\phi$ be a
stable 3-form on $V$.

\begin{enumerate}
    \item If $\phi$ is a $G_2$-structure, $g_7$ has signature $(7,0)$.
    In particular, $g_7$ is the Euclidean metric on $\mathbb{R}^7$ if 
    $\phi$ coincides with $\phi_{G_2}$.

    \item If $\phi$ is a $G_2^{\ast}$-structure, $g_7$ has signature 
    $(3,4)$. In particular, $g_7=g_6 + e^7\otimes e^7$ if $\phi$ 
    coincides with $\phi_{G_2^{\ast}}$.
\end{enumerate}
\end{Le}

We can relate $\text{vol}_7$ to the 3-forms on the 6-dimensional
subspace $\text{span}(v_i)_{i=1,\ldots,6}$.

\begin{Le}
Let $\phi$ be a $G_2$- or $G_2^{\ast}$-structure on a vector space
$V$ and $(v_i)_{i=1,\ldots,7}$ be a basis of $V$ with the properties
from Definition and Lemma \ref{G2Def}. On 
$\text{span}(v_i)_{i=1,\ldots,6}$ there exists a canonical $SU(3)$-
or $SU(1,2)$-structure $(\omega,\rho)$ and we have

\begin{equation}
\text{vol}_7 = \tfrac{1}{4} J^{\ast}_{\rho}\rho \wedge 
\rho \wedge v^7\:.
\end{equation}

In particular, $\text{vol}_7$ is $e^{1234567}$ if $\phi$ is
$\phi_{G_2}$ or $\phi_{G_2^{\ast}}$.
\end{Le}

$g_7$ and $\text{vol}_7$ determine a Hodge-star operator 
$\ast$ on $\bigwedge^{\ast} V^{\ast}$.

\begin{Le}
Let $\phi$ be a $G_2$- or $G_2^{\ast}$-structure. 
The 4-form $\ast\phi$ is stable and can be described as

\begin{equation}
v^7 \wedge J_{\rho}^{\ast}\rho + \tfrac{1}{2}\omega \wedge
\omega\:.
\end{equation}
\end{Le}

\begin{Conv}
We call $g_7$ ($\text{vol}_7$, $\ast\phi$) the \emph{metric 
(volume form, 4-form) that is associated to $\phi$}. 
\end{Conv}

We define $G_2$- and $G_2^{\ast}$-structures on manifolds as
in the previous subsection.

\begin{Def}
\label{G2DefII} Let $M$ be a 7-dimensional manifold and $\phi\in
\bigwedge^3 T^{\ast}M$. Moreover, let $G\in \{G_2, G_2^{\ast}\}$.
$\phi$ is called a \emph{$G$-structure on $M$} if for all $p\in M$
$\phi_p$ is a $G$-structure on $T_pM$.
\end{Def}

\subsection{$G$ is Spin($7$) or $\text{Spin}_0(3,4)$} In this final
subsection, we introduce Spin($7$)- and $\text{Spin}_0(3,4)$-structures.

\begin{DaL}
\label{Spin7Def} We supplement the basis $(e_i)_{i=1,\ldots,7}$ of
$\mathbb{R}^7$ with $e_8$ to a basis of $\mathbb{R}^8$. The form

\begin{enumerate}
    \item $\Phi_{\text{Spin}(7)}:= e^8 \wedge \phi_{G_2} + \ast\phi_{G_2}$
    is stabilized by Spin($7$).
    \item $\Phi_{\text{Spin}_0(3,4)}:= e^8 \wedge \phi_{G_2^{\ast}} +
    \ast\phi_{G_2^{\ast}}$ is stabilized by the identity component
    $\text{Spin}_0(3,4)$ of $\text{Spin}(3,4)$.
\end{enumerate}

Let $V$ be an 8-dimensional real vector space and $\Phi$ be a
4-form on $V$. If there exists a basis $(v_i)_{i=1,\ldots,8}$ of
$V$ such that with respect to $(v_i)_{i=1,\ldots,8}$

\begin{enumerate}
    \item $\Phi$ can be identified with $\Phi_{\text{Spin}(7)}$,
    $\Phi$ is called a \emph{Spin($7$)-structure}.
    \item\label{Spin341} $\Phi$ can be identified with $\Phi_{\text{Spin}_0(3,4)}$,
    $\Phi$ is called a \emph{$\text{Spin}_0(3,4)$-structure}.
\end{enumerate}
\end{DaL}

Analogously to Subsection \ref{SU3Subsection} and \ref{G2Subsection},
any Spin($7$)- or $\text{Spin}_0(3,4)$-structure determines a
symmetric non-degenerate bilinear form $g_8$ and a volume form
$\text{vol}_8$. $\text{vol}_8$ is given by $\tfrac{1}{14}\Phi
\wedge \Phi$ and $g_8$ satisfies a slightly more complicated
relation as (\ref{AssMetricG2}), which can be found in Karigiannis
\cite{Kar}.

Unlike $\omega$, $\rho$, and $\phi$, $\Phi$ is not a stable form.
Nevertheless, we have similar results as in the previous two
subsections.

\begin{Le}
\label{g8Lemma}
Let $\Phi$ be a Spin($7$)- or $\text{Spin}_0(3,4)$-structure. In the
first case $g_8$ has signature $(8,0)$ and in the second case it has
signature $(4,4)$. In particular, $g_8$ is the Euclidean metric on 
$\mathbb{R}^8$ if $\Phi$ coincides with $\Phi_{\text{Spin}(7)}$ and
$g_8=g_7 + e^8\otimes e^8$ if $\Phi$ coincides with 
$\Phi_{\text{Spin}_0(3,4)}$. In both cases, we have

\begin{equation}
\text{vol}_8 = \text{vol}_7 \wedge v^8\:.
\end{equation}
\end{Le}

\begin{Conv}
As in the previous subsections, we call $g_8$ the \emph{associated
metric} and $\text{vol}_8$ the \emph{associated volume form}.
\end{Conv}

\begin{Rem}
\begin{enumerate}
    \item $\Phi$ is self-dual with respect to $g_8$ and 
    $\text{vol}_8$.

    \item Any 4-form on an 8-dimensional real vector space that
    is stabilized by Spin($7$) or $\text{Spin}_0(3,4)$ is
    a Spin($7$)- or $\text{Spin}_0(3,4)$-structure.
    However, there is no simple criterion like Theorem
    \ref{SU3StrThm} that decides if a given 4-form is a
    Spin($7$)- or $\text{Spin}_0(3,4)$-structure.
\end{enumerate}
\end{Rem}

The notion of a Spin($7$)- or a $\text{Spin}_0(3,4)$-structure on
an 8-dimensional mani\-fold can be defined completely analogously
to Definition \ref{SU3DefII} and \ref{G2DefII}.

\section{Hitchin's flow equation}

One motivation to study $G$-structures is their relation
to metrics with special holonomy.

\begin{Def}
\label{TorFree}
Let $G\in\{\text{Spin}(7),\text{Spin}_0(3,4)\}$ and let $\Phi$ be a
$G$-structure on an 8-dimensional manifold. $\Phi$ is called 
\emph{torsion-free} if $d\Phi=0$.
\end{Def}

\begin{Le} 
Let $G$ be as above. The holonomy group of the metric that is 
associated to a torsion-free $G$-structure is a subgroup of $G$.
Conversely, let $(M,g)$ be a semi-Riemannian manifold
whose holonomy is contained in $G$. Then there exists a
torsion-free $G$-structure on $M$ whose associated metric is $g$.
\end{Le}

\begin{proof} See \cite{Fer} for $G=\text{Spin}(7)$ and \cite{Br} 
for $G=\text{Spin}_0(3,4)$.
\end{proof}

\begin{Rem}
There are analogous results for $G\in\{SU(3),SU(1,2),G_2,G_2^{\ast}\}$.
\end{Rem}

We also need the following $G$-structures with torsion.

\begin{Def}
\begin{enumerate}
    \item Let $(\omega,\rho)$ be an $SU(3)$- or $SU(1,2)$-structure
    on a 6-dimensional manifold. $(\omega,\rho)$ is called 
    \emph{half-flat} if $d\rho=0$ and $d\omega\wedge\omega = 0$. 

    \item Let $\phi$ be a $G_2$- or $G_2^{\ast}$-structure on a 
    7-dimensional manifold. $\phi$ is called \emph{cocalibrated} 
    if $d\ast\phi=0$.
\end{enumerate}
\end{Def}

Compact Riemannian manifolds with holonomy Spin($7$)
are hard to construct. However, many non-compact examples with
cohomogeneity one are known \cite{Baz,Baz2,BrS,Cve,KanI,KanII,Rei}.
All of the these metrics can be obtained by a method that was 
developed by Hitchin \cite{Hitchin}. As in the previous section, our 
presentation of the issue is similar as in \cite{Cortes}.

\begin{Th} \label{HitchinThm} (See \cite{Cortes,Hitchin})
Let $N^7$ be a 7-dimensional manifold and $U\subset N^7\times
\mathbb{R}$ be an open neighborhood of $N^7\times \{0\}$. Furthermore,
let $G\in\{G_2,G_2^{\ast}\}$ and $\phi$ be a cocalibrated $G$-structure
on $N^7$. Finally, let $\phi_t$ be  a one-parameter family of 3-forms
such that $\phi_t$ is defined on $U\cap (N^7\times\{t\})$. We assume 
that $\phi_t$ is a solution of the initial value problem

\begin{equation}
\label{HitchinEvol}
\begin{array}{rcl}
\tfrac{\partial}{\partial t} \ast_7 \phi_t & = & d_7\phi_t \\
\phi_0 & = & \phi \\
\end{array}
\end{equation}

The index "$7$" emphasizes that we consider $\ast$ and $d$ as
operators on $U\cap (N^7\times \{t\})$ instead of $U$.  If $U$ is 
sufficiently small, $\phi_t$ is a $G$-structure for all $t$ with 
$U\cap (N^7\times \{t\}) \neq \emptyset$. Moreover, it is 
cocalibrated for all $t$. The 4-form

\begin{equation}
\Phi := dt\wedge \phi_t + \ast_7\phi_t
\end{equation}

is a torsion-free Spin($7$)-structure if $G=G_2$ and a
torsion-free $\text{Spin}_0(3,4)$-structure if $G=G_2^{\ast}$. Let
$g_8$ be the metric that is associated to $\Phi$ and $g_t$ be the
metric on $N^7\times \{t\}$ that is associated to $\phi_t$. With
this notation we have

\begin{equation}
g_8 = g_t + dt^2\:.
\end{equation}
\end{Th}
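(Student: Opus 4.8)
The plan is to regard the existence of the family $\phi_t$ as given (it is part of the hypothesis) and to verify the four assertions of the theorem in turn: that $\phi_t$ stays a $G$-structure, that it stays cocalibrated, that $\Phi$ is pointwise a Spin($7$)- (resp.\ $\text{Spin}_0(3,4)$-) structure with associated metric $g_t+dt^2$, and finally that $d\Phi=0$. For the first point, recall from Remark \ref{G2DefRemark} that the set of $G_2$-structures (resp.\ $G_2^\ast$-structures) on a $7$-dimensional vector space is an \emph{open} $GL(7,\mathbb R)$-orbit, hence an open subset of $\bigwedge^3 V^\ast$. Since $\phi_{t_0}=\phi$ is a $G$-structure at every point of $N_7$ and $(p,t)\mapsto\phi_t(p)$ is continuous, each point of $N_7\times\{t_0\}$ has a neighbourhood on which $\phi_t$ remains in the same open orbit; shrinking $U$ we may assume $\phi_t$ is a $G$-structure on every slice, so that $g_t$, the fibrewise Hodge operator $\ast_7$ and $\ast_7\phi_t$ are defined throughout $U$.

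For cocalibration, set $D(t):=d_7\ast_7\phi_t$, a $t$-dependent $5$-form on $N_7$. Because $d_7$ differentiates only in the $N_7$-directions it commutes with $\partial/\partial t$, so the evolution equation (\ref{HitchinEvol}) gives
\[
\tfrac{\partial}{\partial t}D(t)=d_7\bigl(\tfrac{\partial}{\partial t}\ast_7\phi_t\bigr)=d_7 d_7\phi_t=0 .
\]
Hence $D(t)=D(t_0)=d_7\ast_7\phi=0$ since $\phi$ is cocalibrated, so $\phi_t$ is cocalibrated for all $t$.

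The core algebraic step is pointwise. Fix $(p,t)\in U$. Since $\phi_t(p)$ is a $G$-structure on $T_pN_7$, Definition and Lemma \ref{G2Def} furnishes a basis $(v_1,\dots,v_7)$ of $T_pN_7$ in which $\phi_t(p)$ equals the relevant model form; with respect to the associated metric $g_t$ this basis is pseudo-orthonormal (Lemma \ref{g7Lemma}), so $\ast_7\phi_t(p)$ is the corresponding model $4$-form $\ast\phi_G$, once the orientation is fixed to match the conventions of Section~2. Adjoining $v_8:=\partial/\partial t$, with dual $dt$, the basis $(v_1,\dots,v_8)$ of $T_{(p,t)}U$ exhibits
\[
\Phi=dt\wedge\phi_t+\ast_7\phi_t=v^8\wedge\phi_G+\ast\phi_G ,
\]
which is precisely $\Phi_{\text{Spin}(7)}$ when $G=G_2$ and one of the forms $\Phi_{\text{Spin}_0(3,4),i/ii}$ when $G=G_2^\ast$ (Definition and Lemma \ref{Spin7Def}); thus $\Phi$ is a Spin($7$)- (resp.\ $\text{Spin}_0(3,4)$-) structure at every point. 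In the same basis Lemma \ref{g8Lemma} reads off the associated metric $g_8$: its restriction to $\operatorname{span}(v_1,\dots,v_7)=T_pN_7$ is $g_7=g_t$, it makes $\partial/\partial t$ orthogonal to $T_pN_7$, and $g_8(\partial/\partial t,\partial/\partial t)=1$. As $(p,t)$ was arbitrary, $g_8=g_t+dt^2$.

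Finally, torsion-freeness. On $N_7\times\mathbb R$ the exterior derivative splits as $d=d_7+dt\wedge\tfrac{\partial}{\partial t}$. Hence, since $d(dt)=0$ and $dt\wedge dt=0$,
\[
d(dt\wedge\phi_t)=-dt\wedge d\phi_t=-dt\wedge d_7\phi_t ,
\]
while, using the cocalibration established above and the evolution equation (\ref{HitchinEvol}),
\[
d(\ast_7\phi_t)=d_7\ast_7\phi_t+dt\wedge\tfrac{\partial}{\partial t}\ast_7\phi_t=0+dt\wedge d_7\phi_t .
\]
Adding these, $d\Phi=0$, so $\Phi$ is torsion-free, and by the lemma relating torsion-free $G$-structures to holonomy one obtains the asserted parallel Spin($7$)- or $\text{Spin}_0(3,4)$-structure. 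The only step requiring real care is the product-manifold bookkeeping: making sure $\ast_7$ and $d_7$ are the purely fibrewise operators, that all of their $t$-dependence is absorbed into the single equation (\ref{HitchinEvol}), and that the signs in $d(dt\wedge\phi_t)$ are correct; everything else is immediate from the structural results of Section~2.
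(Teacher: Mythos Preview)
Your proof is correct and is essentially the standard argument. Note, however, that the paper does not supply its own proof of this theorem: it is stated with the attribution ``(See \cite{Cortes}, \cite{Hitchin})'' and taken as known. Your verification that cocalibration is preserved, via
\[
\tfrac{\partial}{\partial t}\, d_7\ast_7\phi_t \;=\; d_7\bigl(\tfrac{\partial}{\partial t}\ast_7\phi_t\bigr) \;=\; d_7 d_7\phi_t \;=\; 0,
\]
is exactly the computation the paper does invoke later, in the proof of Theorem~\ref{ThmOne}, so your approach is fully consistent with the paper's methods. The remaining steps---openness of the $G_2$/$G_2^\ast$-orbit for the persistence of the $G$-structure, the pointwise identification of $\Phi$ with the model Spin($7$)- or $\text{Spin}_0(3,4)$-form via Definition and Lemma~\ref{Spin7Def} and Lemma~\ref{g8Lemma}, and the splitting $d=d_7+dt\wedge\tfrac{\partial}{\partial t}$ to obtain $d\Phi=0$---are precisely how the cited references establish the result.
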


\begin{Rem}
\label{HitchinRemark}
\begin{enumerate}
    \item The equation $\tfrac{\partial}{\partial t} \ast_7 \phi_t =
    d_7\phi_t$ is called \emph{Hitchin's flow equation}.
    Since $\ast_7$ depends non-linearly on $\phi_t$, it is a 
    non-linear partial differential equation.

    \item If $N^7$ and $\phi_0$ are real analytic, the system
    (\ref{HitchinEvol}) has a unique maximal solution that is
    defined on an open neighborhood of $N^7\times
    \{0\}$ \cite{Cortes}. This is a consequence of the
    Cauchy-Kovalevskaya Theorem. We assume from now 
    that all initial data are analytic. If the initial data are
    smooth but non-analytic, examples can be found where no
    short-term solution of (\ref{HitchinEvol}) exists \cite{Br2010}.  

    \item If $N^7$ is in addition compact, there exists a unique 
    maximal open interval $I$ with $0\in I$ such that the solution
    is defined on $N^7\times I$. 

    \item Let $f:N^7\rightarrow N^7$ be a diffeomorphism, $I$
    an interval with $0\in I$, $U = N^7\times I$, and $\phi_t$ be
    a solution of Hitchin's flow equation on $U$. In this situation,
    the pull-back $f^{\ast}\phi_t$ is also a solution with the initial 
    value $f^{\ast}\phi_0$. 
\end{enumerate}
\end{Rem}

There are analogous results for the relationship 
between half-flat $SU(3)$- or $SU(1,2)$-structures and parallel
$G_2$- or $G_2^{\ast}$-structures. The evolution equations

\begin{equation}
\begin{array}{rcl}
\tfrac{\partial}{\partial t} \rho_t & = & d\omega_t \\
\left(\tfrac{\partial}{\partial t} \omega_t\right) \wedge \omega_t
& = & dJ_{\rho_t}^{\ast} \rho_t \\
\end{array}
\end{equation} 

yield a one-parameter family of half-flat $SU(3)$- or 
$SU(1,2)$-struc\-tures on a 6-dimensional manifold 
$N^6$ if the initial value is half-flat. The
3-form $\omega_t\wedge dt + \rho_t$ is a parallel $G_2$-
or $G_2^{\ast}$-structure on an open neighborhood of
$N^6\times \{0\}$ in $N^6\times \mathbb{R}$. A proof
of these facts for the $SU(3)$-case can be found in \cite{Hitchin}
and a for the $SU(1,2)$-case in \cite{Cortes}.

Moreover, it is known that an $SU(2)$-structure on a
5-dimensional ma\-nifold that satisfies certain conditions can
always be embedded into a not necessarily complete Calabi-Yau
threefold \cite{ContiS}. 

The results that we have introduced in this section suggest the
following more general questions. Let $M^n$ be an $n$-dimensional
manifold with some kind of special geometry. What is the  
geometric structure that is induced on hypersurfaces $N^{n-1}$ of $M^n$?
Conversely, can any $(n-1)$-dimensional manifold that is equipped with 
that kind of geometric structure be embedded into a suitable $M^n$? These 
questions are studied in \cite{Br2010},\cite{Conti},\cite{Stock1}, and 
\cite{Stock2}. Since we restrict ourselves to the dimension $n=8$, we 
will not go into further details, but refer the reader to the cited literature.

\section{Proof of the main theorem}

In this section, we consider a 6-dimensional manifold $N^6$ that
carries an $SU(3)$- or $SU(1,2)$-structure $(\omega_0,\rho_0)$.
Our aim is to construct a parallel Spin($7$)- or 
$\text{Spin}_0(3,4)$-structure $\Phi$ on a tubular neighborhood
of the zero section of the trivial bundle $N^6\times \mathbb{R}^2$
such that the restriction of $\Phi$ to $N^6$ is $(\omega_0,\rho_0)$ 
in a suitable sense. More precisely, let $\epsilon>0$ be sufficiently 
small and  

\begin{equation}
B_{\epsilon}(0) := \{(x,y)\in\mathbb{R}^2 | x^2 + y^2 < \epsilon^2\}\:.
\end{equation}

We denote $N^6\times \{0\}\subset N^6\times B_{\epsilon}(0)$ 
shortly by $N^6$. On that submanifold we want to have 

\begin{equation}
\Phi = \tfrac{1}{2}\omega_0\wedge\omega _0+ dx\wedge\rho_0 + 
dy\wedge J_{\rho_0}^{\ast}\rho_0 + dx\wedge dy\wedge\omega_0
\end{equation}

or equivalently

\begin{equation}
\label{SU3N6}
\begin{array}{rcl}
\tfrac{\partial}{\partial y} \lrcorner \left(
\tfrac{\partial}{\partial x} \lrcorner \Phi\right) & = & \omega_0 \\
\tfrac{\partial}{\partial x}\lrcorner \Phi - 
dy \wedge \omega_0 & = & \rho_0 
\end{array}
\end{equation}

Our first step is to construct a $G_2$- or $G_2^{\ast}$-structure 
$\phi$ on

\begin{equation*}
V_{\epsilon} := N^6 \times \{(0,y)\in\mathbb{R}^2 | y^2< \epsilon^2 \}
\end{equation*}

that satisfies 

\begin{equation}
\phi = \rho + dy\wedge \omega \quad
\text{and} \quad d\ast\phi =0
\end{equation}

for a $y$-dependent $SU(3)$- or $SU(1,2)$-structure 
$(\omega,\rho)$ on $N^6$. Next, we insert $\phi$ as 
initial condition into Hitchin's flow equation, where $x$ 
plays the role of the coordinate $t$ in Theorem 
\ref{HitchinThm}. After that, we have finally found our 
$\Phi$. We describe how to construct the 3-form 
on $V_{\epsilon}$. The Hodge dual of $\phi$ is

\begin{equation}
\ast\phi = \frac{1}{2}\omega\wedge\omega + dy\wedge 
J_{\rho}^{\ast} \rho\:.
\end{equation}

$\phi$ is thus cocalibrated if and only if 

\begin{equation}
\label{7dimflow}
\begin{array}{rcl}
\left(\frac{\partial}{\partial y}\omega\right) \wedge \omega & = &
d J_{\rho}^{\ast} \rho \\
d\omega \wedge \omega & = & 0 \\
\end{array}
\end{equation}

for all $y$. In the above equation, $d$ denotes the exterior derivative
on the 6-dimensional manifold $N^6\times \{(0,y)\}$. We see that 
any choice of $\rho$ satisfies the system (\ref{7dimflow}). Since 

\begin{equation}
(\omega\wedge\omega)_y = \omega_0\wedge\omega_0 +
2 \int_0^y d J_{\rho}^{\ast} \rho\: d\widetilde{y} 
\end{equation}

and $d^2=0$, $d\omega\wedge\omega = 0$ is satisfied for
all $y$ if it is satisfied for $y=0$. Of course, $(\omega,\rho)$
shall be an $SU(3)$- or $SU(1,2)$-structure for all
$y\in (-\epsilon,\epsilon)$. Therefore, the system 
that $(\omega,\rho)$ has to satisfy is in fact

\begin{equation}
\begin{array}{rcl}
\left(\frac{\partial}{\partial y}\omega\right) \wedge \omega & = &
d J_{\rho}^{\ast} \rho \\
\omega\wedge\rho & = & 0 \\
2\:\omega^3 & = & 3\: J_{\rho}^{\ast} \rho\wedge\rho \\
\end{array}
\end{equation}

If we take the derivative of the last two equations with respect to 
$y$, we obtain the following system of first order differential 
equations 

\begin{equation}
\label{G2System}
\begin{array}{rcl}
\left(\frac{\partial}{\partial y}\omega\right) \wedge \omega & = &
d J_{\rho}^{\ast} \rho \\
\left(\frac{\partial}{\partial y}\rho\right) \wedge \omega  
+ \rho\wedge \left(\frac{\partial}{\partial y}\omega \right) & = & 0 \\
3\left(\frac{\partial}{\partial y} J_{\rho}^{\ast}\rho\right)\wedge \rho
+ 3J_{\rho}^{\ast}\rho\wedge \left(\frac{\partial}{\partial y} \rho \right)
- 6 \left(\frac{\partial}{\partial y} \omega \right)\wedge \omega^2 
& = & 0 \\ 
\end{array}
\end{equation}

with the initial conditions

\begin{equation}
\label{G2Initial}
\begin{array}{rcl}
d\omega_0 \wedge \omega_0 & = & 0 \\
\omega_0 \wedge \rho_0 & = & 0 \\
2\omega_0^3 & = & 3 J_{\rho_0}^{\ast} \rho_0\wedge\rho_0 \\
\end{array}
\end{equation}

Since all forms in a neighborhood of $\omega_0$ or $\rho_0$
are stable, any solution of (\ref{G2System}) and (\ref{G2Initial})
describes a $G_2$- or $G_2^{\ast}$-structure if $\epsilon$ is
sufficiently small. Let $z^1,\ldots,z^6$ be coordinates on an open
subset $U\subset N^6$. The system (\ref{G2System}) consists of 
$22$ equations for the $35$ coefficient functions of $\omega$ 
and $\rho$. It can be written as

\begin{equation}
\label{ImplicitEquations}
F\left(\omega,\rho,
\frac{\partial\omega}{\partial z^1},\ldots,
\frac{\partial\omega}{\partial z^6},
\frac{\partial\rho}{\partial z^1},\ldots,
\frac{\partial\rho}{\partial z^6},
\frac{\partial \omega}{\partial y},
\frac{\partial \rho}{\partial y}\right)=0\:.
\end{equation}

$\omega$ is up to the sign uniquely determined by 
$\omega^2$ \cite{Cortes,Hitchin}. The first equation of
(\ref{G2System}) thus fixes the value of $\tfrac{\partial 
\omega}{\partial y}$. The second and third equation restrict 
$\rho$ at each $p\in U$ to the set $S$ of all 
$\rho$ that satisfy $\omega\wedge\rho=0$ and $2\omega^3 =
3 J_{\rho}^{\ast} \rho\wedge\rho$. 

We prove that $S$ is a smooth manifold and determine its
dimension. The equation $\omega
\wedge\rho = 0$ is a linear condition on $\rho$. It follows 
from Schur's lemma that the image of the map $\alpha
\mapsto \omega\wedge\alpha$ is either trivial or all
of $\bigwedge^5 T_p^{\ast} U$. The first case can easily 
be excluded and the space of all $\rho$ that satisfy the
above condition thus has dimension $14$. Let $\varphi: 
\bigwedge^3_s T_p^{\ast} U \rightarrow \bigwedge^7
T_p^{\ast} U$ be defined by $\varphi(\rho) = J_{\rho}^{\ast} \rho
\wedge\rho$. In \cite{Cortes} it is proven that

\begin{equation}
(d\varphi)_{\rho}(\alpha) = 2  J_{\rho}^{\ast} \rho \wedge \alpha\:.
\end{equation}

$(d\varphi)_{\rho}$ has rank $0$ or $1$. Since $(d\varphi)_{\rho}(\rho) = 
2  J_{\rho}^{\ast} \rho \wedge \rho$, its rank is $1$ and $S$ is a 
manifold of dimension $13$. $(dF)_{(\frac{\partial
\omega}{\partial y}, \frac{\partial \rho}{\partial y})}$
therefore has maximal rank. The metric that is associated 
to $(\omega,\rho)$ induces a metric on $\bigwedge^3
T_p^{\ast} U$. We denote the orthogonal projection of a 
stable 3-form to the tangent space of $S$ by $\pi_{\omega}$. 
Our next step is to add the equation

\begin{equation}
\label{AdditionalEquation}
\pi_{\omega}\left(\frac{\partial \rho}{\partial y} \right) = 0
\end{equation}

to (\ref{G2System}). We obtain a system of type 
(\ref{ImplicitEquations}), where $F$ is replaced by
a an $\widetilde{F}$ that satisfies

\begin{equation}
\text{rk} (d\widetilde{F})_{(\frac{\partial \omega}{\partial t},
\frac{\partial \rho}{\partial t})} = 35\:.
\end{equation}

With help of the implicit function theorem, the extended
system can be rewritten to 

\begin{equation}
\begin{array}{rcl}
\frac{\partial\omega}{\partial y} & = & F_1\left(
\omega,\rho,
\frac{\partial\omega}{\partial x^1},\ldots,
\frac{\partial\omega}{\partial x^6},
\frac{\partial\rho}{\partial x^1},\ldots,
\frac{\partial\rho}{\partial x^6} \right) \\
\frac{\partial\rho}{\partial y} & = & F_2\left(
\omega,\rho,
\frac{\partial\omega}{\partial x^1},\ldots,
\frac{\partial\omega}{\partial x^6},
\frac{\partial\rho}{\partial x^1},\ldots,
\frac{\partial\rho}{\partial x^6}
\right) \\
\end{array}
\end{equation}

Since $N^6$ is a real analytic manifold, $F_1$ and $F_2$ are 
analytic, too. As in \cite{Cortes}, the Cauchy-Kovalevskaya
theorem guarantees that the extended system has a unique 
solution on an open neighborhood of $N^6 \subset N^6\times
\mathbb{R}$. Thus, (\ref{G2System}) has at least one solution 
on the same open set. If $N^6$ is compact, the solution exists 
on $V_{\epsilon}$ for a sufficiently small $\epsilon>0$. With 
help of Theorem \ref{HitchinThm}, we are finally able to prove
our main theorem.

\begin{Th} \label{MainThm}
Let $N^6$ be an analytic compact 6-manifold and let 
$(\omega_0,\rho_0)$ be an analytic $SU(3)$- or $SU(1,2)$-structure 
with $d\omega_0 \wedge \omega_0= 0$ on $N^6$. Then, there 
exists an $\epsilon>0$ and a parallel Spin($7$)- or 
$\text{Spin}_0(3,4)$-structure $\Phi$ on $N^6\times 
B_{\epsilon}(0)$ such that on $N^6\times \{0\}$ we have 

\begin{equation}
\begin{array}{rcl}
\tfrac{\partial}{\partial y}\lrcorner \tfrac{\partial}{\partial x}
\lrcorner \Phi & = & \omega_0 \\
\tfrac{\partial}{\partial x}\lrcorner \Phi - dy \wedge \omega_0 & = & \rho_0 \\
\end{array}
\end{equation}

where $x$ and $y$ are the standard coordinates on $B_{\epsilon}(0)$.
\end{Th}

\section{An example}

In this section, we show that the 4-form $\Phi$ from Theorem
\ref{MainThm} is not uniquely determined by the initial value 
$(\omega_0,\rho_0)$. Before we start, we define what we mean
by uniqueness in this situation. 

\begin{Def}
\label{EquivDef}
Let $\Phi_1$ and $\Phi_2$ be two Spin($7$)- or 
$\text{Spin}_0(3,4)$-structures on $N^6\times 
B_{\epsilon}(0)$ such that on $N^6\times \{0\}$ we have

\begin{equation}
\begin{array}{rclcl}
\tfrac{\partial}{\partial y}\lrcorner \tfrac{\partial}{\partial x}
\lrcorner \Phi_1 & = & \tfrac{\partial}{\partial y}\lrcorner 
\tfrac{\partial}{\partial x} \lrcorner \Phi_2 & =: & \omega_0\\
\tfrac{\partial}{\partial x}\lrcorner \Phi_1 - dy \wedge \omega_0 & = &
\tfrac{\partial}{\partial x}\lrcorner \Phi_2 - dy \wedge \omega_0  
&& \\
\end{array}
\end{equation}

We call $\Phi_1$ and $\Phi_2$ \emph{equivalent} if there exists
a diffeomorphism $f$ of $N^6\times B_{\epsilon}(0)$ that is the 
identity on $N^6\times \{0\}$ and satisfies $f^{\ast}\Phi_1 = 
\Phi_2$. Analogously, let $\phi_1$ and $\phi_2$ be  
$G_2$- or $G_2^{\ast}$-structures on $N^6 \times
(-\epsilon,\epsilon)$ such that on $N^6\times \{0\}$ we have

\begin{equation}
\label{G2InitialEx}
\begin{array}{rclcl}
\frac{\partial}{\partial y} \lrcorner \phi_1 & = & 
\frac{\partial}{\partial y} \lrcorner \phi_2 & =: & 
\omega_0 \\
\phi_1 - dy\wedge\omega_0 & = & 
\phi_2 - dy\wedge\omega_0 && \\
\end{array}
\end{equation}

$\phi_1$ and $\phi_2$ are called \emph{equivalent} if there exists
a diffeomorphism of $N^6\times (-\epsilon,\epsilon)$ with
the same properties as above.
\end{Def}

We restrict ourselves to the Riemannian case. For our example, 
$(\omega_0,\rho_0)$ shall be torsion-free. In other words, $N^6$ 
together with the initial $SU(3)$-structure is a Calabi-Yau manifold. 
Our strategy is to construct a one-parameter family of $G_2$-structures 
$\phi_{\delta}$ on $N^6\times S^1$ such that the standard coordinate 
$\theta\in \mathbb{R}/2\pi\mathbb{Z}$ of $S^1$ plays the role of 
$y$. After that, we consider Hitchin's flow equation with initial value 
$\phi_{\delta}$ in order to obtain 4-forms $\Phi_{\delta}$. Let $\alpha$ 
be a closed 3-form on $N^6$. We define a $G_2$-structure 
$\phi_{\delta}$ on $N^6\times S^1$ by

\begin{equation}
\label{G2ExampleFormula}
\phi_{\delta} = \omega_0\wedge d\theta  - J_{\rho_0}^{\ast}\rho_0
+ \delta\cdot\sin{\theta}\cdot \ast_6\alpha\:,
\end{equation}

where $\ast_6$ is the Hodge-star on $N^6$. We have 

\begin{equation}
\ast\phi_{\delta} = d\theta \wedge (\rho_0 
+ \delta\cdot \sin{\theta}\cdot \alpha)
+ \tfrac{1}{2} \omega_0\wedge\omega_0\:.
\end{equation}

Since $\phi_0$ is a $G_2$-structure, $\phi_{\delta}$ is also
a $G_2$-structure if $\delta$ is sufficiently small. Moreover,
$\phi_{\delta}$ is cocalibrated  and at $\theta=0$ each term
of (\ref{G2InitialEx}) is independent of $\delta$. 
Let $g_6$ be the metric on $N^6$ that is
associated to $(\omega_0,\rho_0)$ and $g_{8,\delta}$ 
be the metric on $N^6\times S^1\times (-\epsilon,
\epsilon)$ that is associated to $\Phi_{\delta}$. 
Since $\phi_0$ and $\Phi_0$ are both
torsion-free, we have $g_{8,0} = g_6
+ d\theta^2 + dx^2$ and the second fundamental form $II$ 
of $N^6\times \{(0,0)\}$ vanishes. If we find 
an $\alpha$ such that $II\neq 0$, $\Phi_0$
and $\Phi_{\delta}$ are non-equivalent. 

Let $X$ be a unit vector field on $N^6$. $X$ can be lifted to a
vector field on the product $N^6\times S^1\times (-\epsilon,\epsilon)$.
Outside of $N^6\times \{(0,0)\}$, $X$ is in general not a
unit vector field anymore. For all $\alpha$, $\tfrac{
\partial}{\partial \theta}$ is a unit normal field of 
$N^6\times \{(0,0)\}$. Since $[X,\tfrac{\partial}{\partial \theta}]=0$, 
we have on $N^6\times \{(0,0)\}$

\begin{equation}
\begin{aligned}
g\left(II(X,X),\tfrac{\partial}{\partial \theta} \right)
& = g\left(\nabla_X X,\tfrac{
\partial}{\partial \theta} \right) \\
& = \tfrac{1}{2}\left(Xg(X,\tfrac{
\partial}{\partial \theta}) +
Xg(\tfrac{
\partial}{\partial \theta},X) 
-\tfrac{
\partial}{\partial \theta} g(X,X) \right) \\
& = -\tfrac{1}{2} \tfrac{
\partial}{\partial \theta} g(X,X)\:.
\end{aligned}
\end{equation}

Since we can prescribe the value of a closed 3-form
at a fixed point arbitrarily, there exists an $\alpha$
such that the last term of the above equation does not 
vanish globally if $\delta>0$. We thus have proven that 
$\Phi_0$ and $\Phi_{\delta}$ are non-equivalent, although they 
share the same initial values.

\section{Outlook}

Let $N^6$ be a 6-dimensional manifold and $M^8$ be
an arbitrary $\mathbb{R}^2$-bundle over $N^6$. For reasons of brevity, we 
denote the zero section of $M^8$ also by $N^6$. We check under which
conditions $M^8$ admits a not necessarily parallel Spin($7$)- or 
$\text{Spin}_0(3,4)$-structure $\Phi$. 

First, we assume that a Spin($7$)-structure $\Phi$ exists on $M^8$. 
Let $\pi:M^8\rightarrow N^6$ be the projection map and $\pi^{-1}(U)$
with $U\subset N^6$ be the image of a local trivialization. Moreover, 
let $e_x$ and $e_y$ be orthonormal vertical vector fields on $\pi^{-1}(U)$
and $(e^x,e^y)$ be the duals of $(e_x,e_y)$ with respect to the metric. If 
we replace in equation (\ref{SU3N6}) $(\tfrac{\partial}{\partial x},
\tfrac{\partial}{\partial y})$ by $(e_x,e_y)$ and $dy$ by $e^y$, we obtain 
an $SU(3)$-structure $(\omega,\rho)$ on $U$. However, the $SU(3)$-structure 
can in general not be extended to all of $N^6$, since the bundle may not 
admit two global linearly independent sections. 

Spin($7$) acts transitively on the set of all oriented 6-dimensional 
subspaces of $\mathbb{R}^8$. The subgroup that fixes a subspace is 
isomorphic to $U(3)$. Therefore, any 6-dimensional oriented submanifold
of a Spin($7$)-manifold carries a canonical $U(3)$-structure and this is 
the most natural kind of geometry to suppose on $N^6$. In terms of
tensor fields, a $U(3)$-structure is defined by a non-degenerate 2-form
$\omega$, a Riemannian metric $g$ and an almost complex structure
$J$ such that $\omega(X,Y)= g(X,J(Y))$ for all vector fields $X$ and $Y$.  
In our situation, the $U(3)$-structure is determined by $\omega := 
e_y\lrcorner e_x\lrcorner \Phi$ and the restriction of the associated metric 
to the tangent space of $N^6$. Our definition of $\omega$ is independent 
of the choice of $(e_x,e_y)$ and $\omega$ is thus globally defined. The 
$\text{Spin}_0(3,4)$-case is completely analogous, since 
$\text{Spin}_0(3,4)/U(1,2)$ is the Grassmannian of all positive 
oriented planes in $\mathbb{R}^{4,4}$.

We return to the local situation. The restriction of the 4-form to the
subset $U$ of the zero section can be written as

\begin{equation}
\Phi = \tfrac{1}{2}\omega\wedge\omega + e^x\wedge\rho + 
e^y\wedge J_{\rho}^{\ast}\rho + e^x\wedge e^y\wedge\omega\:.
\end{equation}

We choose another $\pi^{-1}(\widetilde{U})$ and vertical vector fields 
$\widetilde{e}_x$ and $\widetilde{e}_y$ on $\widetilde{U}$ with
the same properties as above. Moreover, we assume that $U\cap 
\widetilde{U} \neq \emptyset$. On $\widetilde{U}$ we have

\begin{equation}
\Phi = \tfrac{1}{2}\widetilde{\omega}\wedge\widetilde{\omega} + 
\widetilde{e}^x\wedge\widetilde{\rho} + \widetilde{e}^y\wedge 
J_{\widetilde{\rho}}^{\ast} \widetilde{\rho} + \widetilde{e}^x\wedge \widetilde{e}^y\wedge\widetilde{\omega}
\end{equation}

for another $SU(3)$- or $SU(1,2)$-structure $(\widetilde{\omega},
\widetilde{\rho})$. On the intersection $\pi^{-1}(U\cap\widetilde{U})$
we have

\begin{equation}
\label{NewFrame}
\begin{array}{rcr}
\widetilde{e}_x & = & \cos{\theta}\cdot e_x + \sin{\theta}\cdot e_y \\
\widetilde{e}_y & = & -\sin{\theta}\cdot e_x + \cos{\theta}\cdot e_y \\
\end{array}
\end{equation}

for a function $\theta:U\cap\widetilde{U}\rightarrow\mathbb{R}$. Both terms for
$\Phi$ coincide only if

\begin{equation}
\begin{array}{rcr}
\widetilde{\rho} & = & \cos{\theta}\cdot\rho + \sin{\theta}\cdot J_{\rho}^{\ast}\rho \\
J_{\widetilde{\rho}}^{\ast}\widetilde{\rho} & = & 
-\sin{\theta}\cdot\rho + \cos{\theta}\cdot J_{\rho}^{\ast}\rho \\
\end{array}
\end{equation}

The transition functions for the bundle $M^8$ thus have to
be transition functions for the bundle $\bigwedge^{3,0}T^{\ast}N^6$,
too. In other words, $M^8$ has to be isomorphic to the canonical 
bundle of $N^6$ with respect to the almost complex structure
$J$. 

Conversely, we assume that there exists a line bundle isomorphism
$\eta: M^8 \rightarrow \bigwedge^{3,0} T^{\ast}N^6$ and that 
$N^6$ carries a $U(3)$- or $U(1,2)$-structure $(\omega,g,J)$. We 
choose local trivializations $\varphi_{\alpha}: U_{\alpha} \times 
\mathbb{R}^2 \rightarrow \pi^{-1}(U_{\alpha}) \subseteq M_8$ such 
that the transition functions have values in $SO(2)$. Let $x$ and $y$ be
the standard coordinates of $\mathbb{R}^2$. There exist unique 
one-forms $e^1$ and $e^2$ such that $\varphi_{\alpha}^{\ast}(e^1) 
= dx$ and $\varphi_{\alpha}^{\ast}(e^2) = dy$. If the $U_{\alpha}$ 
are sufficiently small, there exists a $(3,0)$-form $\rho$ on 
$U_{\alpha}$ such that $(\omega,\rho)$ is an $SU(3)$- or 
$SU(1,2)$-structure whose associated metric and almost complex 
structure coincide with $g$ and $J$. Any other $(3,0)$-form with the 
same properties as $\rho$ can be written as

\begin{equation}
\cos{\sigma_{\alpha}}\cdot\rho + \sin{\sigma_{\alpha}}\cdot J^{\ast}_{\rho} \rho
\end{equation} 

for a function $\sigma_{\alpha}:U_{\alpha} \rightarrow \mathbb{R}$.
We choose $\sigma_{\alpha}$ such that 

\begin{equation}
\begin{array}{lr}
\eta^{\ast -1}(e^1)(\cos{\sigma_{\alpha}}\cdot\rho + 
\sin{\sigma_{\alpha}}\cdot J^{\ast}_{\rho} \rho) & >0 \\
\eta^{\ast -1}(e^1)(-\sin{\sigma_{\alpha}}\cdot\rho + 
\cos{\sigma_{\alpha}}\cdot J^{\ast}_{\rho} \rho) & =0 \\
\end{array}
\end{equation}

and define a 4-form

\begin{equation}
\Phi = \tfrac{1}{2} \pi^{\ast}\omega \wedge \pi^{\ast}\omega + e^1\wedge 
\pi^{\ast}\rho + e^2\wedge \pi^{\ast}J^{\ast}_{\rho}\rho + e^1\wedge e^2\wedge 
\pi^{\ast}\omega
\end{equation}

on $\pi^{-1}(U_{\alpha})$. $\Phi$ is a Spin($7$)- or 
$\text{Spin}_0(3,4)$-structure. By a similar argument as before, we 
can prove that $\Phi$ is globally defined. The above observations 
yield the following lemma.

\begin{Le}
Let $M^8$ be an $\mathbb{R}^2$-bundle over a manifold $N^6$ that 
admits a $U(3)$- or $U(1,2)$-structure $(\omega,g,J)$. $M^8$ admits a 
Spin($7$)- or $\text{Spin}_0(3,4)$-structure if and only if $M^8$ is
isomorphic to the canonical bundle of $N^6$.    
\end{Le}  

We therefore propose the following conjecture.

\begin{Conj}
Let $N^6$ be an analytic compact 6-dimensional manifold with 
an analytic $U(3)$- or $U(1,2)$-structure $(\omega,g,J)$ that satisfies 
$d\omega \wedge \omega = 0$. Then there exists a parallel Spin($7$)- 
or $\text{Spin}_0(3,4)$-structure $\Phi$ on a tubular 
neighborhood of the zero section of the canonical bundle of 
$N^6$ such that

\begin{enumerate}
    \item the restriction of the associated metric to $N^6$ 
    coincides with $g$ and

    \item $e_y \lrcorner (e_x\lrcorner \Phi) = \omega$ for
    any two orthonormal vertical vector fields $e_x$ and $e_y$ 
    along $N^6$. 
\end{enumerate}
\end{Conj}

Theorem \ref{MainThm} yields a parallel Spin($7$)- or
$\text{Spin}_0(3,4)$-structures $\Phi_{\alpha}$ on each
set of type $\varphi_{\alpha}(U_{\alpha} \times 
B_{\epsilon_{\alpha}}(0))$ for a sufficiently small 
$\epsilon_{\alpha} > 0$. Since we have added 
equation (\ref{AdditionalEquation}) to our system, which 
makes its solution unique, the $\Phi_{\alpha}$ are in a
certain sense canonical. It would be nice if we could
glue them together to a global Spin($7$)- or
$\text{Spin}_0(3,4)$-structure and thus prove our
conjecture. 

This idea works only if the $\Phi_{\alpha}$ are compatible
with the transition functions $\tau_{\alpha\beta}: U_{\alpha} 
\cap U_{\beta} \rightarrow U(1)$. More precisely, let $x$ 
and $y$ be vertical coordinates on $\pi^{-1}(U_{\alpha})$ 
such that $x$ is mapped to $y$ by $i\in U(1)$. Moreover, we 
introduce coordinates $\widetilde{x}$ and $\widetilde{y}$ on 
$\pi^{-1}(U_{\beta})$ with the same properties. 
On $\pi^{-1}(U_{\alpha} \cap U_{\beta})$
both coordinates are related by an equation that is analogous 
to (\ref{NewFrame}). $\Phi_{\alpha}$ and $\Phi_{\beta}$  
should coincide on $\pi^{-1}(U_{\alpha}\cap U_{\beta})$. In 
particular, this should be the case if $\tau_{\alpha\beta}$ is
constant. In this situation, the restriction of $\Phi_{\alpha}$ 
to $\pi^{-1}(U_{\alpha}\cap U_{\beta})$ is obtained as the 
solution of Hitchin's flow equation with a $G_2$-structure 
$\phi_{\alpha}$ on

\begin{equation}
V_{\alpha}:=(U_{\alpha}\cap U_{\beta}) \times \{(0,y)\in\mathbb{R}^2 | 
y^2 < \min{\{\epsilon_{\alpha},\epsilon_{\beta}\}}^2 \} 
\end{equation}

as initial value. Analogously, the restriction of $\Phi_{\beta}$
to $\pi^{-1}(U_{\alpha}\cap U_{\beta})$ is obtained as the solution
of Hitchin's flow equation with a $G_2$-structure $\phi_{\beta}$ on

\begin{equation}
V_{\beta}:=(U_{\alpha}\cap U_{\beta}) \times \{(\sin{\tau}\cdot y,
\cos{\tau}\cdot y)\in\mathbb{R}^2 | y^2 < 
\min{\{\epsilon_{\alpha},\epsilon_{\beta}\}}^2 \} 
\end{equation}

as initial value, where $\tau$ is the constant value of
$\tau_{\alpha\beta}$. Let $f_{\tau}$ be the diffeomorphism of 
$\pi^{-1}(U_{\alpha}\cap U_{\beta})$ that is defined by 

\begin{equation}
f_{\tau}(p,x,y) := (p,\cos{\tau}\cdot x + \sin{\tau}\cdot y, 
-\sin{\tau}\cdot x + \cos{\tau}\cdot y)\:.
\end{equation}

We restrict $f_{\tau}$ to a map $V_{\alpha}\rightarrow V_{\beta}$.
Since it does not make a difference if we choose the set on which 
we construct the $G_2$-structure as $V_{\alpha}$ or $V_{\beta}$, 
we have $\phi_{\alpha} = f_{\tau}^{\ast}\phi_{\beta}$. Therefore, 
we also have $\Phi_{\alpha} = f_{\tau}^{\ast}\Phi_{\beta}$ for any 
value of$\tau$. The Spin($7$)- or $\text{Spin}_0(3,4)$-structure
$\Phi$ that we obtain by glueing thus has to be preserved by 
$f_{\tau}$. The differential of $f_{\tau}$ at a point of $U_{\alpha}
\cap U_{\beta}$ can be identified with the complex matrix 
$A_{\tau}:=\text{diag}(1,1,1,e^{i\tau})$. Unfortunately, conjugation 
by $A_{\tau}$ does not preserve Spin($7$) or $\text{Spin}_0(3,4)$ 
if we interpret it as a real $8\times 8$-matrix. Therefore, we 
cannot have $\Phi = f_{\tau}^{\ast}\Phi$ and our conjecture cannot 
be proven by this simple idea.

For the same reason we cannot make $\Phi$ unique by assuming 
that the standard $U(1)$-action on the canonical bundle leaves 
$\Phi$ invariant.  Therefore, the $U(3)$- or $U(1,2)$-structure on 
$N^6$ cannot be extended to a $U(1)$-invariant parallel 
Spin($7$)- or $\text{Spin}_0(3,4)$-structure. This is a striking 
difference to \cite{Biel}, where the fact that 
$\text{diag}(1,\ldots,1,e^{i\tau})$ commutes with $SU(n)$ allows 
the existence of a $U(1)$-invariant $SU(n)$-structure on the 
canonical bundle.

\end{document}